\def\ttdwa{{\mathbb T}^2}
\def\cc{{\mathbb C}}
\def\zz{{\mathbb Z}}
\newtheorem{thm}{Theorem}
\newtheorem{deff}[thm]{Definition}
\newtheorem{cor}[thm]{Corollary}
\newtheorem{prop}[thm]{Proposition}
\theoremstyle{definition}\newtheorem{Rem}[thm]{Remark}
\title{On the analytic version of  the Mitiagin - DeLeeuw - Mirkhil non-inequality on bi-disc}
\author[1,2]{Krystian Kazaniecki}
\author[3]{Micha{\l} Wojciechowski}
\affil[1]{Institute of Analysis, Johannes Kepler University Linz}
\affil[2]{Institute of Mathematics, University of Warsaw}
\affil[3]{Institute of Mathematics, Polish Academy of Sciences}
\begin{document}
\maketitle
\begin{abstract}
	Using the method of Rudin-Shapiro polynomials we prove the analytic version of the Mitiagin - DeLeeuw - Mirkhil non-inequality for complex partial differential operators with constant coefficients on bi-disc.
\end{abstract}

\vskip5mm
\let\thefootnote\relax\footnotetext{This research was partially supported by the National Science Centre, Poland, and Austrian Science Foundation FWF joint CEUS programme. National Science Centre project no. 2020/02/Y/ST1/00072 and FWF project no. I5231.}
\section{Introduction}
The classical results of Mitiagin, Mirkhil and DeLeeuw and Ornstein (\cite{Leeuw1964, MR0100216, MR0149331}, see also \cite{C4,KSW,KK6}) say that 
for $p=1$ or $p=\infty$, and homogeneous differential operators of the same degree with constant coefficients $P_0(D), P_1(D),\dots , P_k(D)$ the inequality
\begin{equation}\label{jedyny}
\|P_0(D)f\|_p\lesssim \sum_{i=1}^k \|P_i(D)f\|_p
\end{equation}
holds for every smooth function $f$ with bounded support iff 
\[
P_0\in \operatorname{span}\{P_1,\dots, P_k\}.
\]
Similar problem for differential operators with variable coefficients was considered in \cite{MR2451367}. Analogous characterization holds for functions on tori. In the present paper we deal with the simplest case $d=2$, $P_0(x, y)=xy,$  $P_1(x,y)=x^2$ and $P_2(x,y)=y^2$ with additional restriction that $f$ is analytic, i.e. $\widehat{f}(n)=0$ for $n$ outside the set $\{n\in\mathbb{Z}^2; n_1\geq 0,\; n_2\geq 0\}$. Under such restriction for $p=1$ the inequality \eqref{jedyny} holds true. Indeed $D^2_{xy}f= T_m(D^2_{x^2}f+D^2_{y^2}f)$ where $T_m$ is an invariant operator given by the multiplier $m(n_1,n_2)=\frac{n_1 n_2}{n_1^2+n_2^2}$. By (\cite{MW}, see also \cite{rFef}, \cite{Carl} ) it is bounded in $L^1$ norm for analytic functions.
Surprisingly, in this note we show that the situation in $L^\infty$ norm is different. Namely we have:
\begin{thm}\label{Torbezs}
For every $C>0$, there exists an analytic trigonometric polynomial $f$ on the torus $\ttdwa$ such that
\[ \|D^2_{xx}f\|_\infty+\|D^2_{yy}f\|_\infty\leq 1\] but \[\|D^2_{xy}f\|_\infty>C.\]
\end{thm}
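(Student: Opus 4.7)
The plan is to construct, for any large constant $C>0$, an explicit analytic trigonometric polynomial $f=f_N$ on $\ttdwa$ (with $N$ chosen sufficiently large depending on $C$) witnessing the non-inequality. The core ingredient is a classical Rudin-Shapiro polynomial $R_N(\zeta)=\sum_{k=0}^{N}\epsilon_k\zeta^k$ with $\epsilon_k\in\{\pm 1\}$ and $\|R_N\|_{L^\infty(\ttt)}\leq c\sqrt{N+1}$, lifted to the bi-torus in such a way that its Fourier support lies in $\zzp\times\zzp$ (ensuring analyticity) while the three multipliers $n_1^2$, $n_2^2$ and $n_1n_2$ produce qualitatively different sup norms.

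Concretely, I would begin with an ansatz of the form
\[
f_N(x,y)=\sum_{(k,\ell)\in\Lambda_N}\sigma_{k,\ell}\,\mu(k,\ell)^{-1}\,e^{i(kx+\ell y)},
\]
where $\Lambda_N\subset\zzp\times\zzp$ is a frequency support (for instance a diagonal band $\{k+\ell=N\}$ or a lacunary union of rectangles), $\sigma_{k,\ell}$ are Rudin-Shapiro-type signs in two variables built from the recursion defining $R_N$, and $\mu(k,\ell)$ is a positive normalizing weight. The three second-derivative operators then act as Fourier multipliers with symbols $k^2/\mu$, $\ell^2/\mu$ and $k\ell/\mu$, and the desired separation is driven by the interaction between the flatness of the sign pattern and the relative growth of these weights on $\Lambda_N$.

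To bound $\|D^2_{xx}f_N\|_\infty$ and $\|D^2_{yy}f_N\|_\infty$ from above one would use Bernstein-type inequalities on the bi-torus together with the $\sqrt{N+1}$-flatness of the Rudin-Shapiro polynomials; to bound $\|D^2_{xy}f_N\|_\infty$ from below one may evaluate at a point where the exponentials align constructively or invoke the Plancherel lower bound $\|g\|_\infty\geq\|g\|_2$ on $\ttdwa$. The algebraic identity $2n_1n_2=(n_1+n_2)^2-n_1^2-n_2^2$ is useful in the reduction: it rewrites the task as producing an analytic $f$ whose directional second derivative $(D_x+D_y)^2 f$ is much larger in $L^\infty$ than the sum of its pure second derivatives.

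The main obstacle, and the reason the construction is not immediate, is the sharp contrast with the $L^1$ situation recalled in the introduction: the relevant multiplier $\tfrac{n_1n_2}{n_1^2+n_2^2}$ is bounded on analytic $L^1$, so no duality argument is available, and naive tensor products such as $\tilde R_N(x)\tilde R_N(y)$ produce pure and mixed derivatives of the same order (both of order $N^3$), giving no separation whatsoever. The delicate step is therefore to choose the two-dimensional sign pattern $\sigma_{k,\ell}$ and the support $\Lambda_N$ so that multiplication of Fourier coefficients by $k^2$ and by $\ell^2$ preserves the Rudin-Shapiro cancellation (keeping the pure derivatives controlled), while multiplication by $k\ell$ destroys this cancellation and concentrates the mass of $D^2_{xy}f_N$ so that its sup norm exceeds that of the pure derivatives by a factor tending to infinity with $N$.
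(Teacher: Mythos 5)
Your proposal is a plan, not a proof: every step that actually carries the content of the theorem is left as "to be chosen". You never specify the frequency set $\Lambda_N$, the sign pattern $\sigma_{k,\ell}$, or the weight $\mu(k,\ell)$, and you yourself identify the choice of these objects as "the delicate step" without executing it. That step is the entire theorem. The paper's construction shows what is actually needed: one builds a Rudin--Shapiro pair $p_n,q_n$ on lacunary frequencies $n_k\in\mathbb Z^2_+$ and \emph{defines} $F$ by $(z_1\partial_{z_1})^2F=p_n$, so the first pure derivative is flat by the Rudin--Shapiro identity; the geometry of the $n_k$ is then tuned so that the multiplier ratio $m_2/m_1$ is $\approx\kappa$ on a selected half of the frequencies and $\approx 0$ on the rest, which makes the second pure derivative small (multiplier $\approx\kappa^2$) while the mixed derivative (multiplier $\approx\kappa$) is bounded below by evaluating a large sub-sum at the point $\mathbf 1$. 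The separation comes from the gap between $\kappa$ and $\kappa^2$ combined with the fact that a suitable partial sum of a Rudin--Shapiro polynomial at a single point has size $\sum_k|a_k|\prod_j(1+|a_j|^2)^{1/2}$, far exceeding the sup norm of the full polynomial. None of this mechanism appears in your sketch; your description of the goal ("multiplication by $k^2$ and $\ell^2$ preserves the cancellation while $k\ell$ destroys it") does not match how any known construction works and gives no indication of how to achieve it.

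One of your two proposed routes to the lower bound is provably dead. If $\|D^2_{xx}f\|_\infty\le 1$ and $\|D^2_{yy}f\|_\infty\le 1$, then a fortiori the $L^2$ norms are at most $1$, and by Cauchy--Schwarz on the Fourier side,
\begin{equation*}
\|D^2_{xy}f\|_2^2=\sum_{k,\ell}k^2\ell^2|\widehat f(k,\ell)|^2\le\Bigl(\sum_{k,\ell}k^4|\widehat f(k,\ell)|^2\Bigr)^{1/2}\Bigl(\sum_{k,\ell}\ell^4|\widehat f(k,\ell)|^2\Bigr)^{1/2}\le 1,
\end{equation*}
so the Plancherel bound $\|D^2_{xy}f\|_\infty\ge\|D^2_{xy}f\|_2$ can never exceed $1$ under your hypotheses. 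The only viable route is pointwise evaluation at a distinguished point, which you mention but do not develop. Finally, the identity $2n_1n_2=(n_1+n_2)^2-n_1^2-n_2^2$ does not help: $(D_x+D_y)^2f$ is not one of the controlled operators, so nothing is gained by the rewriting.
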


It is not difficult to see that the Poisson extensions
of $D^2_{xx}f$, $D^2_{yy}f$ and $D^2_{xy}f$ onto unit bi-disc are respectively $z_2\frac{\partial}{\partial z_2}\big(z_1\frac{\partial}{\partial z_1}\big)F$,
$\big(z_1\frac{\partial}{\partial z_1}\big)^2F$ and 
$\big(z_2\frac{\partial}{\partial z_2}\big)^2F$, where $F$ is the Poisson extension of $f$. Using the maximum principle we can then reformulate Theorem \ref{Torbezs}:

\begin{thm}\label{dyskbezs}
For every $c>0$, there exists an analytic polynomial $F$ on the unit bi-disc such that
\[
\left\|\left(z_1\frac{\partial}{\partial z_1}\right)^2F\right\|_{A(D^2)}\leq 1 \mbox{ and }
\left\|\left(z_2\frac{\partial}{\partial z_2}\right)^2F\right\|_{A(D^2)}\leq 1 
\]
but
\[
\left\|z_2\frac{\partial}{\partial z_2}\left(z_1\frac{\partial}{\partial z_1}\right)F\right\|_{A(D^2)}\geq c.
\]
\end{thm}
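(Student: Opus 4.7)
The plan is to derive Theorem~\ref{dyskbezs} from Theorem~\ref{Torbezs} as a direct reformulation, simply making precise the two observations stated immediately before the theorem.

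First, given an analytic trigonometric polynomial $f$ on $\ttdwa$ with Fourier expansion $f(x_1,x_2) = \sum_{n_1,n_2\geq 0} \widehat f(n_1,n_2)\, e^{i(n_1x_1+n_2x_2)}$, I would define $F$ on the closed bi-disc by replacing each $e^{in_jx_j}$ with the monomial $z_j^{n_j}$. Because all nonzero Fourier modes of $f$ lie in $\zzdwa_+$, this $F$ is an analytic polynomial, and its radial limits on the distinguished boundary $\ttdwa\subset\partial D^2$ recover $f$, so $F$ is the Poisson extension of $f$. A direct computation on monomials then shows that
\[
(z_1\partial_{z_1})^2 F, \qquad (z_2\partial_{z_2})^2 F, \qquad z_2\partial_{z_2}(z_1\partial_{z_1}) F
\]
are again analytic polynomials whose boundary values on $\ttdwa$ agree with $-D^2_{xx}f$, $-D^2_{yy}f$ and $-D^2_{xy}f$ respectively (the signs come from $(in_j)^2=-n_j^2$).

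Second, I would apply the one-variable maximum modulus principle iteratively in $z_1$ and $z_2$ to obtain $\|G\|_{A(D^2)} = \sup_{\ttdwa}|G|$ for every analytic polynomial $G$ on the bi-disc. Applied to the three derivatives above, this identifies each bi-disc norm in Theorem~\ref{dyskbezs} with the corresponding $L^\infty(\ttdwa)$ norm appearing in Theorem~\ref{Torbezs}.

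It then suffices to invoke Theorem~\ref{Torbezs} with $C=c$: since $\|D^2_{xx}f\|_\infty+\|D^2_{yy}f\|_\infty\le 1$ forces each individual norm to be at most $1$, the Poisson extension $F$ of the resulting $f$ satisfies all three required bounds. The main obstacle is therefore not in this reformulation, which is pure bookkeeping, but in establishing Theorem~\ref{Torbezs} itself, where the Rudin--Shapiro construction announced in the abstract will do the real work.
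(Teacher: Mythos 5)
Your transfer between the torus and the bi-disc is correct: for an analytic polynomial $G$ the sup over the closed bi-disc equals the sup over the distinguished boundary $\ttdwa$, the Euler operators $(z_j\frac{\partial}{\partial z_j})^2$ correspond to $-D^2_{x_jx_j}$ on boundary values, and the bound on the sum of two norms trivially controls each summand. As a statement that Theorem \ref{Torbezs} and Theorem \ref{dyskbezs} are equivalent, this is exactly the remark the paper makes in the introduction, and your bookkeeping (signs included) is fine.

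The gap is that your proof of Theorem \ref{dyskbezs} consists entirely of this equivalence, with all substance deferred to Theorem \ref{Torbezs} --- and in the paper the logical traffic runs the other way. The Rudin--Shapiro construction you invoke is carried out directly on the bi-disc: the paper's Section 2 proof builds $F$ and $G$ with $(z_1\frac{\partial}{\partial z_1})^2F=p_n$ and $(z_1\frac{\partial}{\partial z_1})^2G=q_n-1$, estimates the three norms there, and only afterwards reads off the torus statements (Corollary \ref{stawka}) as boundary values. So pointing back to Theorem \ref{Torbezs} as the place where Rudin--Shapiro ``will do the real work'' is circular relative to the paper's quantitative argument. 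The reduction becomes non-circular only if you pair it with the independent qualitative proof of Theorem \ref{Torbezs} in Section \ref{sect: qapp} (Hahn--Banach plus Wiener's theorem on the discrete parts of measures), which you do not mention and which yields no quantitative information. Either way, your proposal contains none of the actual construction: no lacunary frequencies $n_k$ with prescribed ratios $(n_k)_2/(n_k)_1$, no coefficients $a_k$, no extraction of the subset $A$ making the value at ${\bf 1}$ large. That is where the theorem actually lives, and it is missing.
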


Moreover, we provide the dependence of the above constant $c$ on the degree of $F$ (Theorem \ref{thm: stala} and Corollary \ref{stawka}). A double logarithmic bound on the growth of the constant in Theorem \ref{Torbezs} (without analyticity) could be derived from the construction of Curc{\u a} (cf.\cite{curca} sections 5.1 and  5.2;). Our construction not only gives analytic example but also provides (almost) logarithmic bound.
\begin{thm}\label{thm: stala}
There exists an analytic polynomial $P_n$ of degree $n$ of two complex variables such that 
\[
\|\big(z_1\frac{\partial}{\partial z_1}\big)^2P_n\|_{A(D^2)}\leq 1\quad\mbox{ and }\quad
\|\big(z_2\frac{\partial}{\partial z_2}\big)^2P_n\|_{A(D^2)}\leq 1
\]
but
\[\|z_2\frac{\partial}{\partial z_2}\big(z_1\frac{\partial}{\partial z_1}\big)P_n\|_{A(D^2)}\gtrsim \log^{1/8} n.
\]
\end{thm}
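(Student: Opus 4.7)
Write $P_n=\sum_{m,k\ge 0} a_{m,k}\,z_1^m z_2^k$ and set $b_{m,k}=mk\,a_{m,k}$, $H=\sum b_{m,k} z_1^m z_2^k$. Then the three quantities in the statement are precisely the bidisc-algebra norms of the Fourier-multiplier transforms of $H$ with symbols $m/k$, $k/m$ and $1$ respectively (note that $H$ is automatically supported in $\{m,k\ge 1\}$). Hence the theorem is equivalent to producing an analytic polynomial $H$ of degree at most $n$, with spectrum in $\{m,k\ge 1\}$, satisfying
\[
\|T_{m/k}H\|_{A(D^2)}\le 1,\quad \|T_{k/m}H\|_{A(D^2)}\le 1,\quad \|H\|_{A(D^2)}\gtrsim (\log n)^{1/8}.
\]
Since $m/k$ and $k/m$ are bounded only in regions where $m\asymp k$, the support of $H$ should be concentrated near the diagonal.

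\textbf{Rudin--Shapiro blocks.} Fix lacunary scales $L_1<L_2<\cdots<L_K$ with $K\asymp\log n$ and $L_K\le \log_2 n$, and for each $j$ set $B_j=[2^{L_j},2^{L_j+1})\times[2^{L_j},2^{L_j+1})$. The plan is to build a bivariate Rudin--Shapiro polynomial
\[
R_j(z_1,z_2)=\sum_{(m,k)\in B_j}\varepsilon^{(j)}_{m,k}\,z_1^m z_2^k,\qquad \varepsilon^{(j)}_{m,k}\in\{-1,+1\},
\]
with $\|R_j\|_{A(D^2)}\lesssim \sqrt{|B_j|}=2^{L_j}$, via a two-dimensional analog of the Rudin--Shapiro recursion (for instance by running the classical recursion along alternating coordinate axes inside $B_j$, with a companion polynomial absorbing the cross-terms). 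Because $m/k,k/m\in[1/2,2]$ on $B_j$, the images $T_{m/k}R_j$ and $T_{k/m}R_j$ are bounded multiplicative perturbations of $R_j$, and the same $O(2^{L_j})$ bound for their $A(D^2)$-norms can be obtained either by iterating the Rudin--Shapiro construction with companion polynomials tuned to absorb these weights, or by a Kahane--Salem--Zygmund random-sign argument (accepting a harmless $\sqrt{\log}$ factor).

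\textbf{Assembly and main obstacle.} Set $H=\sum_{j=1}^K c_j R_j$ with $c_j\asymp 2^{-L_j}$; the triangle inequality together with the lacunarity of the scales then yields $\|T_{m/k}H\|_{A(D^2)},\|T_{k/m}H\|_{A(D^2)}\lesssim 1$. The central difficulty is the lower bound $\|H\|_{A(D^2)}\gtrsim K^{1/8}$. Although the $R_j$ have disjoint spectra, they all sit near the diagonal of $\zzdwa$, so their peaks on $\ttdwa$ can cluster and potentially interfere destructively. I would rule out this interference by choosing the Rudin--Shapiro signs $\varepsilon^{(j)}$ randomly and invoking a Khintchine--Kahane-type inequality in the analytic bidisc algebra to obtain a probabilistic lower bound of the form $\mathbb E_\varepsilon\|H\|_{A(D^2)}\gtrsim K^{\alpha}$, followed by a derandomization step to produce an explicit polynomial. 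The target exponent $\tfrac18=\tfrac12\cdot\tfrac12\cdot\tfrac12$ should arise from a three-step cascade: one square-root loss absorbing the multiplier perturbation, one from the Kahane--Khintchine step in the analytic algebra (where one cannot use the full strength of real-variable orthogonality), and one from the derandomization. Verifying the quantitative Khintchine--Kahane estimate in $A(D^2)$ for the bi-parameter lacunary family $(R_j)$, and carefully tracking the constants through the three halving steps, is what I expect to be the main analytic obstacle.
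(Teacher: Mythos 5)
Your reformulation via $H=(z_1\partial_{z_1})(z_2\partial_{z_2})P_n$ and the multipliers $m/k$, $k/m$ is correct and matches the spirit of the paper, but the construction you then sketch has two gaps that I believe are fatal rather than technical. First, the upper bound: with $c_j\asymp 2^{-L_j}$ and $\|T_{m/k}R_j\|_{A(D^2)}\asymp 2^{L_j}$, the triangle inequality over the $K$ blocks gives $\|T_{m/k}H\|_{A(D^2)}\lesssim K$, not $\lesssim 1$; lacunarity of the scales $L_j$ does not help, since each block contributes $\asymp 1$. The entire difficulty is to make the \emph{sum over blocks} obey a square-function-type bound in the sup norm, and running the Rudin--Shapiro recursion separately inside each block cannot produce that. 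The paper's recursion runs \emph{across} the blocks: the $k$-th block carries the coefficient polynomial $q_{k-1}(z^{-1})$ produced by the previous steps, and the identity $|p_n(z)|^2+|q_n(\bar z)|^2=\prod_k(1+|a_k|^2)$ is exactly what replaces your triangle inequality by $\bigl(\sum_k\cdots\bigr)^{1/2}$-type control.

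Second, the lower-bound mechanism. Concentrating the spectrum near the diagonal forces $m/k,k/m\in[1/2,2]$, so $\|T_{m/k}H\|_{L^2}\asymp\|H\|_{L^2}\asymp\|T_{k/m}H\|_{L^2}$; if you then choose the signs randomly, Kahane--Salem--Zygmund makes all three polynomials flat, i.e.\ their sup norms are all comparable to their (comparable) $L^2$ norms up to $\sqrt{\log}$ factors, so no gap of size $K^{1/8}$ can emerge this way --- the randomization that you invoke for the lower bound simultaneously destroys it. The paper's gap comes from anisotropy, not from near-diagonal structure: the frequencies are placed so that $m(M)_2/m(M)_1\approx\kappa=n^{-1/4}$ on a selected family of monomials and $\approx 0$ on the rest, whence the mixed operator acts like multiplication by $\kappa$ while $(z_2\partial_{z_2})^2$ acts like $\kappa^2$; the lower bound is then deterministic, obtained by evaluating at the distinguished point ${\bf 1}=(1,1)$ after choosing (via the elementary $1/\pi$ selection of a subset $A$ of indices) which blocks to keep so that the values $a_kq_{k-1}({\bf 1})$ add up to $\asymp\sum_k|a_k|\prod_{j<k}(1+|a_j|^2)^{1/2}$. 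With $a_k=k^{-1/2}$ this yields a ratio $n^{1/4}$ between the mixed and pure second Euler derivatives, and since the lacunarity needed for the frequency conditions forces $\log\deg P\asymp n^2$, the exponent $1/8$ arises as $\tfrac14\cdot\tfrac12$ --- not from a cascade of three halvings.
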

The similar quantitative result follows immediately on tori for boundary values of analytic functions.  
\begin{cor}\label{stawka}
There exists an analytic trigonometric polynomial $P_n$ of degree $n$ of two complex variables such that 
\[ \|D^2_{xx}f\|_\infty+\|D^2_{yy}f\|_\infty\leq 1\] but \[\|D^2_{xy}f\|_\infty \gtrsim \log^{1/8} n.\]
\end{cor}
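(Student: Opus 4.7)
The plan is to obtain Corollary \ref{stawka} directly from Theorem \ref{thm: stala} by transferring analytic polynomials on the bi-disc to their boundary values on $\ttdwa$; the substantive estimate has already been proved in the theorem, and what remains is essentially bookkeeping. Concretely, I would take the polynomial $P_n$ of degree $n$ supplied by Theorem \ref{thm: stala} and define the trigonometric polynomial $f(x,y) := P_n(e^{ix},e^{iy})$. This $f$ is automatically analytic, since $P_n$ involves only nonnegative powers of $z_1,z_2$, and its trigonometric degree equals the polynomial degree $n$ of $P_n$.

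The next step is the operator dictionary. On any monomial $z_1^a z_2^b$ the Euler-type operator $z_1\partial_{z_1}$ acts by multiplication by $a$, and the same multiplication is produced by $-i\partial_x$ acting on $e^{i(ax+by)}$. Hence on $\ttdwa$ one has
\[
D^2_{xx} f = -(z_1\partial_{z_1})^2 P_n,\qquad
D^2_{yy} f = -(z_2\partial_{z_2})^2 P_n,\qquad
D^2_{xy} f = -z_2\partial_{z_2}(z_1\partial_{z_1}) P_n.
\]
Since each right-hand side is itself an analytic polynomial on the bi-disc, the maximum principle identifies its $A(D^2)$-norm with the $L^\infty(\ttdwa)$-norm of the corresponding boundary value. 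Plugging in the bounds of Theorem \ref{thm: stala} gives $\|D^2_{xx} f\|_\infty \leq 1$, $\|D^2_{yy} f\|_\infty \leq 1$ and $\|D^2_{xy} f\|_\infty \gtrsim \log^{1/8} n$; a harmless rescaling of $f$ by $1/2$ then reduces the sum $\|D^2_{xx} f\|_\infty+\|D^2_{yy} f\|_\infty$ to at most $1$ while only absorbing a constant factor into the lower bound.

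There is no serious obstacle: the only verifications needed are the one-line operator dictionary on Fourier monomials and the obvious preservation of degree under $z_j \mapsto e^{ix_j}$. All the analytic and combinatorial content already sits inside Theorem \ref{thm: stala}, so Corollary \ref{stawka} should be recorded as a formal consequence rather than as a proof of independent substance.
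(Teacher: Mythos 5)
Your proposal is correct and is exactly the route the paper takes: the authors state that Corollary \ref{stawka} ``follows immediately on tori for boundary values of analytic functions,'' i.e.\ by restricting the polynomial of Theorem \ref{thm: stala} to the distinguished boundary, matching the Euler operators $z_j\partial_{z_j}$ with $-i\partial_{x_j}$ on monomials, and invoking the maximum principle. Your write-up merely makes the implicit bookkeeping (including the harmless factor-of-two rescaling to pass from two separate bounds to a bound on the sum) explicit.
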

 For some other problems related to quantitative estimates of the norm of derivatives from below see \cite{Tely}.
 
With a little effort we can replace the Euler derivatives from Theorem \ref{dyskbezs} by the ordinary derivatives (lower order derivatives appear here to compensate for the lack of compactly embedded support):

\begin{thm}\label{czystebezs}
For every $c>0$, there exists an analytic polynomial $F$ on unit bi-disc such that
\[
\big\|F\big\|_{A(D^2)}\leq 1,\;\; \left\|\frac{\partial}{\partial z_1}F\right\|_{A(D^2)}\leq 1,\;\;\left\|\frac{\partial}{\partial z_2}F\right\|_{A(D^2)}\leq 1,
\]
and
\[
 \left\|\frac{\partial^2}{\partial z^2_1}F\right\|_{A(D^2)}\leq 1,\;\; \left\|\frac{\partial^2}{\partial z^2_2}F\right\|_{A(D^2)}\leq 1
\]
but
\[
\left\|\frac{\partial^2}{\partial z_1 \partial z_2} F\right\|_{A(D^2)}\geq c.
\]
\end{thm}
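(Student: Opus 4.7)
The plan is to start from a polynomial $F$ provided by Theorem~\ref{dyskbezs} (equivalently, by the quantitative construction of Theorem~\ref{thm: stala}) and to deduce, by elementary boundary and frequency manipulations, bounds on the \emph{ordinary} partial derivatives of $F$ from the given bounds on its Euler derivatives $L_i=z_i\partial_i$. The key structural input is that the Rudin--Shapiro building blocks used in the proof of those theorems can be arranged so that $F$ has Fourier support concentrated in a high-frequency dyadic band $[N,2N]\times[N,2N]$ for $N$ as large as we wish.

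Two elementary tools do the job. First, since the $A(D^2)$-norm of an analytic polynomial equals its sup-norm on $\ttdwa$ (where $|z_i|=1$), multiplication by any monomial $z_i^k$ is an isometry on the space of analytic polynomials, and $(z_i\partial_i)^2=z_i^2\partial_i^2+z_i\partial_i$ yields
\[
\|\partial_i F\|_{A(D^2)}=\|L_iF\|_{A(D^2)},\quad
\|\partial_i^2 F\|_{A(D^2)}=\|L_i^2F-L_iF\|_{A(D^2)},\quad
\|\partial_1\partial_2 F\|_{A(D^2)}=\|L_1L_2F\|_{A(D^2)}.
\]
Hence the cross-derivative lower bound from Theorem~\ref{dyskbezs} is preserved verbatim, and the remaining task is to control the lower-order quantities $\|F\|_{A(D^2)}$ and $\|L_iF\|_{A(D^2)}$. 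Second, a frequency-division lemma: if $g$ is an analytic polynomial with first-variable Fourier support in $\{m_1\ge N\}$, writing $g=z_1^N h$ gives $\|h\|_{A(D^2)}=\|g\|_{A(D^2)}$ and $|g(tz_1,z_2)|\le t^N\|g\|_{A(D^2)}$, from which
\[
\Bigl\|\sum_{m_1,m_2}\frac{\widehat g(m_1,m_2)}{m_1}z_1^{m_1}z_2^{m_2}\Bigr\|_{A(D^2)}=\Bigl\|\int_0^1 g(tz_1,z_2)\,\frac{dt}{t}\Bigr\|_{A(D^2)}\le\frac{\|g\|_{A(D^2)}}{N}.
\]
Iterating (or applying the same estimate in the second variable) gives the analogous $1/N^2$ bound for the multiplier $m_1^{-2}$ and the symmetric statements in $m_2$.

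Applying this lemma to $L_1^2 F$, which has the same Fourier support as $F$ and satisfies $\|L_1^2F\|_{A(D^2)}\le 1$, one gets $\|L_1F\|_{A(D^2)}\le 1/N$, $\|F\|_{A(D^2)}\le 1/N^2$, and symmetrically $\|L_2F\|_{A(D^2)}\le 1/N$. Combining with the boundary identities,
\[
\|F\|_{A(D^2)},\;\|\partial_iF\|_{A(D^2)},\;\|\partial_i^2F\|_{A(D^2)}\;\le\;1+\tfrac{1}{N},\qquad
\|\partial_1\partial_2F\|_{A(D^2)}\ge c',
\]
where $c'$ may be prescribed arbitrarily large by Theorem~\ref{dyskbezs}. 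A rescaling $F\mapsto F/(1+\tfrac{1}{N})$ then produces the polynomial promised by Theorem~\ref{czystebezs}. The main delicate point, which has to be verified by inspecting the construction, is the opening claim that the Rudin--Shapiro building blocks of Theorem~\ref{dyskbezs} can be placed inside a high-frequency band $[N,2N]^2$ with $N$ large: this is what makes the frequency-division lemma applicable with the $O(1/N)$ gain, which in turn absorbs all the lower-order-derivative contributions without destroying the cross-derivative blow-up.
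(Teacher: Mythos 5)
Your reduction from Euler derivatives to ordinary derivatives is exactly the paper's: the identities $\|\partial_iF\|_{A(D^2)}=\|L_iF\|_{A(D^2)}$, $z_i^2\partial_i^2F=L_i^2F-L_iF$ and $z_1z_2\partial_1\partial_2F=L_1L_2F$, combined with the maximum principle on the distinguished boundary, appear verbatim in the paper's proof of Theorem \ref{czystebezs}. Where you diverge is in how the lower-order terms $\|F\|$, $\|L_iF\|$ are controlled. The paper does this crudely but robustly: it sums the Fourier coefficients in absolute value, using that $F$ has at most $2^k$ monomials whose first frequency is comparable to $(n_k)_1>2^{k^2}$, so $\sum_M|b_M|m(M)_1^{-j}\le\sum_k 2^k2^{-jk^2}<2$ for $j=1,2$; this needs only the lacunarity already built into the construction and yields $O(1)$ bounds, which suffice after a final rescaling. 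Your frequency-division lemma (the integral representation $\int_0^1 g(tz_1,z_2)\,dt/t$ of the multiplier $m_1^{-1}$ on spectra contained in $\{m_1\ge N\}$) is a cleaner and quantitatively stronger device, giving $O(1/N)$ rather than $O(1)$.

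However, the spectral hypothesis you flag is not merely ``delicate'': as stated it is false for this construction. The frequencies of $F$ are the alternating sums $m(M)=\sum_j(-1)^jn_{i_j}$ of a lacunary sequence with $|n_k|\simeq 3^{\lambda nk}$, so they spread over enormously many octaves and cannot sit in a band $[N,2N]^2$. What your argument actually uses is only the one-sided bound $m(M)_i\ge N$. In the first coordinate this is harmless (start the lacunary sequence high; lacunarity gives $m(M)_1\gtrsim(n_{\max M})_1$). In the second coordinate it genuinely fails under the paper's conditions: condition $(iii)$ only forces $\sum_{\max M\notin A}|m(M)_2/m(M)_1|<1$, so those $m(M)_2$ may be small or zero, and your symmetric application of the lemma in $z_2$ (needed for $\|L_2F\|\le 1/N$) breaks down. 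The fix is easy and is essentially what the paper does: since $m(M)_2\le m(M)_1$ throughout, estimate $L_2F$ via the multiplier $m_2/m_1^2\le 1/m_1$ applied to $L_1^2F$, i.e.\ divide only by the first coordinate. With that repair your argument is complete and gives a slightly sharper conclusion than the paper's.
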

Despite we are mostly interested in the quantitative estimates, in the last section of the article we provide another purely qualitative proof.

\begin{Rem}
Our qualitative method (Theorem 1, Section \ref{sect: qapp}) extends to the general situation i.e. homogeneous differential operators of the same degree with constant coefficients $P_0(D), P_1(D),\dots , P_k(D)$ such that $P_0\notin \operatorname{span}\{P_1,\dots, P_k\}$. However when it comes to quantitative results (Theorem \ref{thm: stala}, Corollary \ref{stawka}) we only know how to extend them under some geometrical conditions as in \cite[Theorem 2]{KW}. 
\end{Rem}

Motivated by Theorem \ref{czystebezs} we propose the following definition.

\begin{deff}
Let $\Omega\subset\cc^2$ and $z\in\partial \Omega$ be its boundary point. We say that
$z$ is a local MDM point for $\Omega$ provided there exists a ball $B$ centered at $z$ such that for every $c>0$, there exists an analytic polynomial $F$ on $\Omega\cap B$ such that
\[
\big\|F\big\|_{A(\Omega\cap B)}\leq 1,\;\; \left\|\frac{\partial}{\partial z_1}F\right\|_{A(\Omega\cap B)}\leq 1,\;\; \left\|\frac{\partial}{\partial z_2}F\right\|_{A(\Omega\cap B)}\leq 1
\]
and
\[
\left\|\frac{\partial^2}{\partial z^2_1}F\right\|_{A(\Omega\cap B)}\leq 1,\;\; \left\|\frac{\partial^2}{\partial z_2^2}F\right\|_{A(\Omega\cap B)}\leq 1
\]
but
\[
\big|\frac{\partial^2}{\partial z_1 \partial z_2} F(z)\big|\geq c.
\]
\end{deff}
Similarly for bounded domains $\Omega$ we define the global MDM point by replacing $B$ by $\cc^2$ in the above definition. Using this notion we can reformulate Theorem \ref{czystebezs} saying that point ${\bf 1}:=(1,1)$ is a (global) MDM point of the unit bi-disc (actually it is easy to see that any Shilov boundary point of a bi-disc is its MDM point).
Note that MDM property is inherited by inclusion i.e. if $z\in \overline{\Omega}\subset \overline{\Omega'}$ is an MDM point of $\Omega'$ then $z$ is an MDM point of $\Omega$. There are some natural restrictions on $z\in\partial\Omega$ to be an MDM point. Indeed, suppose $z\in\partial \Omega$ is an internal point of $\hat{\Omega}$ - the polynomial convex hull of $\overline{\Omega}$. Then there exists  a ball $B$ centered at $z$ contained in $\hat{\Omega}$. For every analytic polynomial $f$ we have 
\[
\big|\frac{\partial^2}{\partial z_1 \partial z_2} f(z)\big| \leq C_B \sup\limits_{\omega\in B} |f(\omega)|\leq
 C_B \sup\limits_{\omega\in \Omega} |f(\omega)|.
\]
The first inequality is a consequence of the Cauchy formula \cite[1.2.2. (v)]{Rudin} and second follows by definition of polynomial convex hull \cite[p.~53]{horm}. We conjecture that above obstacle is the only one which prevents the MDM property. However even in the case of the unit ball of $\cc^2$ we do not know whether any MDM point exists.

\vspace{5mm}
\textbf{Acknowledgments} We would like to thank Paul F.X. Müller and Fedor Nazarov for valuable comments and suggestions.

\section{Proofs of quantitative results}
\begin{proof}[Proof of Theorem \ref{dyskbezs}]
Let $(n_k)\subset \zz^2_+$ be a lacunary sequence which will be defined later. Let $(a_k)_{k=1}^n$ be a sequence of scalars. Define the sequence of Rudin - Shapiro polynomials:
\[
p_0(z)= a_0 z^{n_0}\quad\quad q_0=1
\]
and for $n>0$
\[
\aligned
p_n(z)&=p_{n-1}(z)+a_n z^{n_k} { q_{n-1}(z^{-1})} \\
q_n(z)&=q_{n-1}(z)- \bar a_n {z}^{n_k} {p_{n-1}(z^{-1})}
\endaligned
\]
(here we adopt the notation $z^{-1}=(z_1^{-1},z_2^{-1})$ for $z=(z_1,z_2)\in\mathbb C^2$).
By the simple induction, $p_k$'s and $q_k$'s are holomorphic polynomials.
By the elementary identity valid for any $x,y,a\in\mathbb C$,
\[
|x+ay|^2+|y-\bar{a}x|^2=(1+|a|^2)(|x|^2+|y|^2)
\]
we get for any $|z_1|=|z_2|=1$,
\[
|p_n(z)|^2+|q_n(\bar z)|^2=\prod_{k=0}^n(1+|a_k|^2)
\]
Indeed, this follows inductively since for $|z_1|=|z_2|=1$,
\[
\aligned
|p_k(z)|^2+|q_k(\bar z)|^2
 &=|p_{k-1}(z)+a_n z^{n_k} { q_{k-1}(z^{-1})}|^2\\
&\qquad+|q_{k-1}(\bar z)- \bar a_n {\bar z}^{n_k} {p_{k-1}(\bar z^{-1})}|^2\\
&=|p_{k-1}(z)+a_k z^{n_k} { q_{k-1}(z^{-1})}|^2\\
&\qquad+|z^{n_k}q_{k-1}(z^{-1})- \bar a_k {p_{k-1}(z)}|^2\\
&= (1+|a_k|^2)(|p_k(z)|^2+|q_k(\bar z)|^2)
\endaligned
\]
By the construction we get
\[
p_n(z)= a_0z^{n_0}+\sum_{k=1}^n a_k z^{n_k} {q_{k-1}(z^{-1})}
\]
and
\[
q_n(z)=1-\sum_{k=0}^{n} \bar a_k{z}^{n_k}{p_{k-1}(z^{-1})}
\]
For any sequence of complex numbers $\{\omega_k\}_{k=1}^{n}$ there exists a subset $A\subset \{0,1,2,\dots,n\}$ such that
\[
|\sum_{k\in A} \omega_k|\geq \frac{1}{\pi} \sum_{k=1}^{n} |\omega_n|
\]
Note that the values  $p_k({\bf 1})$ and $q_k({\bf 1})$ are independent on the choice sequence $(n_k)$. They only depend on the choice of the sequence $(a_k)_{k=1}^{n}$. For fixed sequence $(a_k)$ we choose larger of two sums $ \sum_{k=0}^n |a_k{q_{k-1}({\bf 1})}|$, $\sum_{k=1}^n | \bar a_k{p_{k-1}({\bf 1})}|$. Without loss of generality we assume it is the first one and we put $\omega_k =a_k{q_{k-1}({\bf 1})}$. Therefore there exists $A:=A((a_k))\subset \{0,1,2,\dots,n\}$ such that
\[
\aligned
|\sum_{k\in A} a_k q_{k-1}&({\bf 1})|+
|\sum_{k\in A}  \bar a_k{p_{k-1}({\bf 1})}|\\
&\geq \frac{1}{2\pi} \big(\sum_{k=0}^n |a_k{q_{k-1}({\bf 1})}|+
\sum_{k=1}^n | \bar a_k{p_{k-1}({\bf 1})}|\big)\\
&=\frac{1}{2\pi} \sum_{k=0}^n |a_k|\big(|{q_{k-1}({\bf 1})}|+|{p_{k-1}({\bf 1})}|\big)\\
&\geq \frac{1}{2\pi}\sum_{k=0}^n |a_k|\big(|{q_{k-1}({\bf 1})}|^2+|{p_{k-1}({\bf 1})}|^2\big)^{\frac{1}{2}}\\
&\geq \frac{1}{2\pi} \sum_{k=1}^n |a_k|\prod_{j=0}^{k-1}(1+|a_j|^2)^{1/2}\\
\endaligned
\]
On the other hand, for $|z_1|=|z_2|=1$ 
\[
\aligned
|\sum_{k\in A} a_k z^{n_k}& {q_{k-1}(z^{-1})}|+
|\sum_{k\in A}  \bar a_k{\bar z}^{n_k}{p_{k-1}(\bar z^{-1})}|\\
&\leq\sum_{k=0}^n |a_k|\cdot |{q_{k-1}(z^{-1})}|+
\sum_{k=0}^n |\bar a_k|\cdot |{p_{k-1}(\bar z^{-1})}|\\
&\leq\sum_{k=0}^n |a_k|\cdot \big(|{q_{k-1}(z^{-1})}|+|{p_{k-1}(\bar z^{-1})}|\big)\\
&\leq \sqrt{2} \sum_{k=1}^n |a_k|\prod_{j=0}^{n}(1+|a_j|^2)^{1/2}\\
\endaligned
\]
Put now $\big(z_1\frac{\partial}{\partial z_1}\big)^2F=p_n$ and $\big(z_1\frac{\partial}{\partial z_1}\big)^2G=q_n-1$. Clearly
\begin{equation}\label{czyste1}
\begin{split}
    \max\{\|\big(z_1\frac{\partial}{\partial z_1}\big)^2F\|_{A(D^2)},\|\big(z_1\frac{\partial}{\partial z_1}\big)^2G\|_{A(D^2)}\}&\leq \max\{\|p_n\|_{A(D^2)},\|q_n\|_{A(D^2)}\}+1
    \\&\leq \left(\sup_{z\in D^2}\lvert p_n(z)\rvert^2+\lvert q_n(z)\rvert^2\right)^{\frac{1}{2}}+1
    \\&= \left(\sup_{\lvert z_1\rvert=\lvert z_2\rvert=1} \lvert p_n(z_1,z_2)\rvert^2+\lvert q_n(z_1,z_2)\rvert^2\right)^{\frac{1}{2}}+1
    \\&\leq \prod\limits_{n=1}^{\infty} (1+a_n^2)^{\frac{1}{2}}+1.
\end{split}
\end{equation}
Function $p_k$ is a polynomial which is a sum of $2^k$ monomials of the form $b_Mz^{m(M)}$, where 
$M=\{i_0,i_1,\dots,i_{2s}\}\subset\{0,1,2,\dots,n\}$ and 
$m(M)=\sum_{j=0}^{2s}(-1)^jn_{i_j}$ and $|b_M|=\prod_{j\in M} |a_{n_j}|$. Similarly $q_k$ is a polynomial which is a sum of 1 and $2^k-1$ monomials of the form $b_Mz^{m(M)}$, where 
$M=\{i_0,i_1,\dots,i_{2s+1}\}\subset\{0,1,2,\dots,n\}$ and 
$m(M)=\sum_{j=0}^{2s+1}(-1)^{j+1}n_{i_j}$ and $|b_M|=\prod_{j\in M} |a_{n_j}|$.
Therefore
\[
z_2\frac{\partial}{\partial z_2}\big(z_1\frac{\partial}{\partial z_1}\big)F(z)=
\sum_{2\notdivides \#M} b_M\frac{m(M)_2}{m(M)_1}z^{m(M)}
\]
and
\[
\big(z_2\frac{\partial}{\partial z_2}\big)^2F(z)=
\sum_{2\notdivides\#M} b_M\left(\frac{m(M)_2}{m(M)_1}\right)^2z^{m(M)}
\]
Similarly
\[
z_2\frac{\partial}{\partial z_2}\big(z_1\frac{\partial}{\partial z_1}\big)G(z)=
\sum_{2|\#M}  b_M\frac{m(M)_2}{m(M)_1} z^{m(M)}
\]
and
\[
\big(z_2\frac{\partial}{\partial z_2}\big)^2 G(z)=
\sum_{2|\#M} b_M\left(\frac{m(M)_2}{m(M)_1}\right)^2 z^{m(M)}
\]
Suppose that the sequence $(n_k)$ was chosen in such a way that for $0<b<1$
\[
\aligned
(i)&\qquad\sum_{\max M\in A}\big|\frac{m(M)_2}{m(M)_1}-\kappa\big|< 1\\
(ii)&\qquad\sum_{\max M\in A}\big|\left(\frac{m(M)_2}{m(M)_1}\right)^2-
\kappa^2\big|<1\\
(iii)&\qquad\sum_{\max M\notin A}\big|\frac{m(M)_2}{m(M)_1}\big|< 1\\
(iv)&\qquad m(M)\in \zz^2_+\qquad\text{for $M\subset\{0,1,2,\dots,n\}$}
\endaligned
\]
The above conditions are satisfied if the ratio $\frac{(n_{k})_2}
{(n_{k})_1}$ is suitably chosen with respect to the fact $k\in A$ or not, and  $\frac{|n_{k+1}|}
{|n_{k}|}$ is big enough. Simple calculations show that one can choose $|n_{k}|\simeq 3^{\lambda n k}$ for suitable $\lambda$ cf. \cite{KW}. For this choice 
\begin{equation}\label{eq: logdeg}
\log \operatorname{deg}F \simeq \log \operatorname{deg} G\simeq  n^2.
\end{equation}
From now on we will assume that
\[
|a_k|\leq 1
\]
Therefore $|b_M|\leq 1$. Then, by the triangle inequality,
\begin{equation}\label{mieszana}
\begin{split} 
\big|z_2\frac{\partial}{\partial z_2}\big(z_1\frac{\partial}{\partial z_1}\big)F({\bf 1})\big|&+\big|z_2\frac{\partial}{\partial z_2}\big(z_1\frac{\partial}{\partial z_1}\big)G({\bf 1})\big|  \\
&>\big|\kappa\sum_{k\in A} a_k{q_{k-1}({\bf 1})}\big|+
\big|\kappa\sum_{k\in A} \bar a_k{p_{k-1}({\bf 1})}\big|\\
&- \sum_{\max M\in A}^n | b_M|\cdot\big|\frac{m(M)_2}{m(M)_1}-\kappa\big|-
\sum_{\max M\notin A}|b_M|\cdot\big|\frac{m(M)_2}{m(M)_1}\big|\\
&\geq  \frac{ \kappa}{2\pi}\sum_{k=1}^n |a_k|\prod_{j=0}^{k-1}(1+|a_j|^2)^{1/2}-2
\end{split}
\end{equation}
Moreover, for $|z_1|=|z_2|=1$,
\begin{equation}\label{czysta2}
\begin{split} 
\big|\big(z_2\frac{\partial}{\partial z_2}\big)^2F(z )\big|+&\big|\big(z_2\frac{\partial}{\partial z_2}\big)^2G(\bar z)\big|  \\
&<\big|\kappa^2\sum_{k\in A} a_kz^{n_k} {q_{k-1}(z)}\big|+
\big|\kappa^2\sum_{k\in A} \bar a_k\bar{z}^{n_k}{p_{k-1}(\bar z)}\big|\\
&+  \sum_{\max M\in A}^n | b_M|\cdot\big|\frac{m(M)_2}{m(M)_1}-\kappa^2\big|+
\sum_{\max M\notin A}|b_M|\cdot\big|\frac{m(M)_2}{m(M)_1}\big|\\
&\leq \sqrt{2} \kappa^2\sum_{k=1}^n |a_k|\prod_{j=0}^{k-1}(1+|a_j|^2)^{1/2}+2
\end{split}
\end{equation}
We put $a_k=k^{-\frac{1}{2}}$ and $\kappa=n^{-\frac{1}{4}}$. Substituting aforementioned values in \eqref{czyste1} gives us
\[
\begin{split}
\max\{\|\big(z_1\frac{\partial}{\partial z_1}\big)^2F\|_{A(D^2)},\|\big(z_1\frac{\partial}{\partial z_1}\big)^2G\|_{A(D^2)}\}&\leq \prod_{k=1}^{n} (1+\frac{1}{k})^{\frac{1}{2}}+1=\prod_{k=1}^{n} (\frac{k+1}{k})^{\frac{1}{2}}+1
\\&=\sqrt{n+1}+1.
\end{split}
\]
By \eqref{czysta2}
\[
\begin{split}
\max\{\|\big(z_2\frac{\partial}{\partial z_2}\big)^2F\|_{A(D^2)},\|\big(z_2\frac{\partial}{\partial z_2}\big)^2G\|_{A(D^2)}\}&\leq \sqrt{2} n^{-\frac{1}{2}} \sum_{k=1}^n k^{-\frac{1}{2}} \prod_{j=0}^{k-1}(1+\frac{1}{j})^{1/2}+2
\\&\leq  \sqrt{2n} +2.
\end{split}
\]
Finally by \eqref{mieszana} for mixed derivative we get
\[
\begin{split}
\max\{\|z_2\frac{\partial}{\partial z_2}\big(z_1\frac{\partial}{\partial z_1}\big)F\|_{A(D^2)},\|z_2\frac{\partial}{\partial z_2}\big(z_1\frac{\partial}{\partial z_1}\big)G\|_{A(D^2)}\}&\geq \frac{n^{\frac{1}{4}}}{ 2\pi }\sum_{k=1}^n k^{-\frac{1}{2}} \prod_{j=0}^{k-1}(1+\frac{1}{j})^{1/2}-2
\\&\geq \frac{n^{\frac{3}{4}}}{ 2\pi}-2.
\end{split}
\]
Hence for fixed $n$ we get in Theorem \ref{dyskbezs}
\[
c= \left(\frac{1}{2\sqrt{2} \pi}+o(1)\right)n^{\frac{1}{4}}
\]
\end{proof}
Considering now either $F(z)$ or $G(z)$ and a sequence $\{n_k\}$ such that $(i)-(iv)$ and \eqref{eq: logdeg} are satisfied we obtain Theorem \ref{thm: stala}.

Now we can deduce Theorem \ref{czystebezs} from the proof of Theorem \ref{dyskbezs}.
\begin{proof}[Proof of Theorem \ref{czystebezs}]
From the construction $(n_{k})_1> 2^{k^2}$. Then 
\[
\begin{split}
|F(z)|+|G(z)|\leq \sum_{M\subset \{1,\ldots,n\}} |b_M| \frac{1}{m(M)^2_1}& \leq \sum_{k=1}^{n}\sum_{\max M=k} \frac{1}{(n_{k})^2_1}
\\&\leq \sum_{k=1}^{n} \frac{2^k}{2^{2k^2}} < 2.
\end{split}
\]
Similarly
\[
\begin{split}
\left|z_1 \frac{\partial}{\partial z_1}F(z)\right|+\left|z_1 \frac{\partial}{\partial z_1} G(z)\right|&\leq \sum_{M\subset \{1,\ldots,n\}} |b_M| \frac{1}{m(M)_1} 
\\& \leq \sum_{k=1}^{n}\sum_{\max M=k} \frac{1}{|(n_{k})_1|}\leq \sum_{k=1}^{n} \frac{2^k}{2^{k^2}} < 2.
\end{split}
\]
Since we may assume $\frac{m(M)_2}{m(M)_1}\leq 1$ we get
\[
\begin{split}
\left|z_2 \frac{\partial}{\partial z_2}F(z)\right|+\left|z_2 \frac{\partial}{\partial z_2} G(z)\right|&\leq \sum_{M\subset \{1,\ldots,n\}} |b_M| \frac{m(M)_2}{m(M)^2_1} 
\\& \leq \sum_{k=1}^{n}\sum_{\max M=k} \frac{1}{|(n_{k})_1|}\leq \sum_{k=1}^{n} \frac{2^k}{2^{k^2}} < 2.
\end{split}
\]
Now since $\frac{\partial}{\partial z_2}F(z)$ is an analytic function we have following identity
\[
\left\|\frac{\partial}{\partial z_2}F(z)\right\|_{\infty}= \sup_{|z_1|=|z_2|=1} \left|\frac{\partial}{\partial z_2}F(z)\right|= \sup_{|z_1|=|z_2|=1} \left|z_1\frac{\partial}{\partial z_2}F(z)\right|< 2.
\]
On the other hand for $j\in\{1,2\}$
\[
z_j\frac{\partial}{\partial z_j} \left(z_j\frac{\partial}{\partial z_j}\right)F(z)=z_j^2\frac{\partial^2}{\partial z_j^2} F(z)+ z_j \frac{\partial}{\partial z_j} F.
\]
Once again because partial derivative is analytic we have
\[
\begin{split}
\left\|\frac{\partial^2}{\partial z_i^2} F(z)\right\|_{\infty}&= \sup_{|z_1|=|z_2|=1} \left|\frac{\partial^2}{\partial z_i^2} F(z)\right|= \sup_{|z_1|=|z_2|=1} \left|z_i^2\frac{\partial^2}{\partial z_i^2} F(z)\right|
\\&\leq  \left\|z_i\frac{\partial}{\partial z_i}\left(z_i\frac{\partial}{\partial z_i}\right)F\right\|_{\infty} + \left\|z_i \frac{\partial}{\partial z_i} F\right\|_{\infty}< 1+2=3.
\end{split}
\]
Finally for mixed derivatives we have 
\[
\begin{split}
\left\|\frac{\partial}{\partial z_1}\left(\frac{\partial}{\partial z_2}\right)F\right\|_{\infty}&=\sup_{|z_1|=|z_2|=1} \left|\frac{\partial}{\partial z_1}\left(\frac{\partial}{\partial z_2}\right)F\right|
\\&=\sup_{|z_1|=|z_2|=1}\left|z_1\frac{\partial}{\partial z_1}\left(z_2\frac{\partial}{\partial z_2}\right)F\right|
\\&=\left\|z_1\frac{\partial}{\partial z_1}\left(z_2\frac{\partial}{\partial z_2}\right)F\right\|_{\infty}.
\end{split}
\]
By analogous arguments we obtain estimates for the function $G$. One of the functions $F$ or $G$ is the wanted polynomial.
\end{proof}

\section{Qualitative approach}\label{sect: qapp}
In this section we give a qualitative proof of Theorem \ref{Torbezs}. It is simpler but  in contrast to Theorem \ref{thm: stala}, does not provide any dependence of the constant on the degree of involved polynomials. It is based, as the proof by Mirkhil and DeLeeuw \cite{Leeuw1964}, on the Hahn-Banach theorem. The new ingredient is the use of Wiener's singularity criterion.

\begin{proof}[Qualitative proof of Theorem \ref{Torbezs}] Denote by $C_{\mathbb{Z}^2_+}^{\{\partial^2_1,\partial_2^2\}}(\mathbb{T}^2)$ the closure of analytic trigonometric polynomials of mean zero with respect to the norm \[
|\!\|f\|\!|=\|D^2_{xx} f  \|_{\infty}+\|D^2_{yy}f \|_{\infty}
\]
This space embeds isometrically as closed subspace into $C(\mathbb{T}^2,\mathbb{C}^2)$ by the formula
\[
f \rightarrow (D^2_{xx}f,D^2_{yy}f) 
\]
Suppose that the assertion of Theorem \ref{Torbezs} does not hold then the functional $\Lambda:C_{\mathbb{Z}^2_+}^{\{\partial^2_1,\partial_2^2\}}(\mathbb{T}^2)\rightarrow \mathbb{C}$ given by the formula 
\[
\Lambda(f)= D^2_{xy} f(0,0) 
\]
is continuous. By the Hahn-Banach theorem it extends to $C(\mathbb{T}^2,\mathbb{C}^2)$. Thus there exists a pair of bounded measures $\mu$ and $\nu$ such that 
\[
  D^2_{xy} f(0,0) =\int_{\mathbb{T}} D^2_{xx}f \operatorname{d} \mu + \int_{\mathbb{T}} D^2_{xx}f \operatorname{d} \mu    
\]
Applying the above formula to characters $(t,s)\rightarrow e^{i(n t+ms)}$ we get
\begin{equation}\label{freq}
\frac{nm}{n^2+m^2} =\frac{n^2}{n^2+m^2}\, \widehat{\mu}(n,m)+\frac{m^2}{n^2+m^2}\,\widehat{\nu}(n,m).
\end{equation}
Let
\[
K_r(n,m)=\{(k,l)\in \mathbb{Z}^2_+ : \max\{|n-k|,|m-l|\}<r\}.
\]
It follows from \eqref{freq} that  
\begin{equation}\label{wspol}
\begin{split}
\widehat{\mu}(k,l)+\widehat{\nu}(k,l)&=1+o(1)\;\qquad\mbox{for } (k,l) \in K_n(n^2,n^2),\\
 \widehat{\mu}(k,l)&=o(1)\qquad \qquad\mbox{for }  (k,l)\in K_n(n^2,n),\\
    \widehat{\nu}(k,l)&=o(1)\qquad\qquad\mbox{for }  (k,l)\in K_n(n,n^2). 
    \end{split}
\end{equation}
Then  
\[
\begin{split}
A&=\liminf_{n\rightarrow\infty}\frac{1}{\# K_n(n^2,n^2)} \sum_{(k,l)\in K_n(n^2,n^2)} |\widehat{\mu}(k,l)|^2+|\widehat{\nu}(k,l)|^2 \geq \frac{1}{4},\\
B&=\lim_{n\rightarrow\infty}\frac{1}{\# K_n(n^2,n)} \sum_{(k,l)\in K_n(n^2,n)} |\widehat{\mu}(k,l)|^2 = 0,\\
C&=\lim_{n\rightarrow\infty}\frac{1}{\# K_n(n,n^2)} \sum_{(k,l)\in K_n(n,n^2)} |\widehat{\nu}(k,l)|^2 = 0.
\end{split}
\]
By Wiener's theorem on the discrete parts of measures \cite[p. 45 ]{Katz}, $A$ is a genuine limit and we get following contradiction
\[
A=\sum_{\tau\in\mathbb{T}^2} |\mu(\{\tau\})|^2+|\nu(\{\tau\})|^2=\sum_{\tau\in\mathbb{T}^2} |\mu(\{\tau\})|^2+\sum_{\tau\in\mathbb{T}^2} |\nu(\{\tau\})|^2=B+C.\\
\]
\end{proof}
In the end we present yet another approach to Theorem \ref{Torbezs}, which could be used to indicate wider class of MDM points.
\begin{prop}
    Let $\Omega\subset \mathbb{C}^2$ and  $(z_0,w_0)\in \partial\Omega$ be such that
    \begin{enumerate}
    \item there are two different supporting hyperplanes $L$, $K$ at $(z_0,w_0)$;
    \item $L\cap K $ is not a complex line; 
    \item $L\cap \overline{\Omega} =\{(z_0,w_0)\}$.
    \end{enumerate}
    Then $(z_0,w_0)$ is an MDM point of $\Omega$.  
\end{prop}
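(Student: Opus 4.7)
The plan is to adapt the Hahn--Banach argument of Section~\ref{sect: qapp} to this geometric setting. After translation assume $(z_0,w_0)=(0,0)$, and pick complex-linear forms $\ell_j$ with $L=\{\operatorname{Re}\ell_1=0\}$, $K=\{\operatorname{Re}\ell_2=0\}$ and $\operatorname{Re}\ell_1,\operatorname{Re}\ell_2\le 0$ on $\overline\Omega$. Condition~(2) is equivalent to the $\mathbb{C}$-linear independence of $\ell_1,\ell_2$ (otherwise $\ell_2=\lambda\ell_1$ with $\lambda\notin\mathbb R$, which makes $L\cap K=\{\ell_1=0\}$ a complex line); writing $\ell_1=az_1+cz_2$, $\ell_2=bz_1+dz_2$, this says $ad-bc\neq 0$.

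Assume for contradiction that the MDM property fails at $0$. Then $F\mapsto\partial_{z_1}\partial_{z_2}F(0)$ is bounded on analytic polynomials in the norm $\|F\|_\infty+\sum_{j}(\|\partial_{z_j}F\|_\infty+\|\partial_{z_j}^2F\|_\infty)$ over $\overline{\Omega\cap B}$, so Hahn--Banach and Riesz--Markov produce finite Borel measures $\mu_0,\mu_1,\mu_2,\nu_1,\nu_2$ on $\overline{\Omega\cap B}$ representing this functional. Test with $F_t=\exp(t(\alpha\ell_1+\beta\ell_2))$ for $\alpha,\beta>0$: these are uniform limits on the compact $\overline{\Omega\cap B}$ of their Taylor polynomials (together with all first- and second-order derivatives), so the representation formula extends to them. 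Setting $p=\alpha a+\beta b$ and $q=\alpha c+\beta d$, the formula reads
\[
t^2pq=\int F_t\,d\mu_0 + tp\int F_t\,d\mu_1 + tq\int F_t\,d\mu_2 + t^2p^2\int F_t\,d\nu_1 + t^2q^2\int F_t\,d\nu_2.
\]
Condition~(3) forces $\operatorname{Re}\ell_1<0$ on $\overline{\Omega\cap B}\setminus\{0\}$; with $\alpha>0$ this propagates to $\operatorname{Re}(\alpha\ell_1+\beta\ell_2)<0$ off the origin, so $F_t\to\mathbf{1}_{\{0\}}$ pointwise and $|F_t|\le 1$. Dominated convergence gives $\int F_t\,d\rho\to\rho(\{0\})$ for each of the five measures; dividing the identity by $t^2$ and letting $t\to\infty$ we obtain
\[
pq=Ap^2+Bq^2,\qquad A:=\nu_1(\{0\}),\ B:=\nu_2(\{0\}),
\]
valid for all $\alpha,\beta>0$.

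Both sides of this identity are polynomials in $(\alpha,\beta)$, so matching the coefficients of $\alpha^2$, $\alpha\beta$ and $\beta^2$ leaves the overdetermined system $ac=Aa^2+Bc^2$, $ad+bc=2Aab+2Bcd$, $bd=Ab^2+Bd^2$ in the two unknowns $A,B$. A direct elimination, with a short case split when $ad+bc=0$ or some of $a,b,c,d$ vanish, forces $(ad-bc)^2=0$, contradicting condition~(2). The main obstacle is the passage to the limit $t\to\infty$: this is precisely where condition~(3) is used, for otherwise $F_t$ would fail to converge to $\mathbf{1}_{\{0\}}$ on the support of the measures, and extra terms carried by the set $\{\operatorname{Re}\ell_1=0\}\cap\overline\Omega$ would pollute the limit identity and destroy the clean algebraic contradiction.
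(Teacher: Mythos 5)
Your proposal is correct and is essentially the paper's own argument: both negate the MDM property, invoke Hahn--Banach and Riesz representation to obtain measures, test against exponentials of the supporting functionals that concentrate at $(z_0,w_0)$, and reach an overdetermined algebraic system (your $pq=Ap^2+Bq^2$ for independent linear forms $p,q$ is the same contradiction as the paper's ``three distinct roots $p_j$ of a quadratic''). The only notable variation is that you use the continuous two-parameter family $e^{t(\alpha\ell_1+\beta\ell_2)}$ with $\alpha,\beta>0$ in place of the paper's three fixed functionals $\gamma_1,\gamma_2,\gamma_3$, which has the pleasant side effect that condition (3), applied to $L$ alone, cleanly forces the pointwise decay of every test function off the origin.
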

\begin{proof}
    Since we have two supporting hyperplanes at point $(z_0,w_0)$ there are two different  affine functions $\gamma_i:\mathbb{C}^2 \rightarrow \mathbb{C}$  given by
    \[
    \gamma_j(z,w)=x_j(z-z_0) + y_j (w-w_0), \qquad j\in\{1,2\}
    \]
    such that 
    \[
    \operatorname{Re}\left( \gamma_j(z,w)\right) \geq 0 \qquad\mbox{for } (z,w)\in\overline{\Omega}.
    \]

Note that if we have two such functions there is another one $\gamma_3$ with above properties (in fact there are infinitely many). Since $K\cap L$ is not a complex line we may assume that 
\begin{equation}\label{eq: stosunki}
x_j\neq 0,\quad y_j\neq 0,\qquad
\frac{x_i}{y_i}\neq\frac{x_j}{y_j}\qquad\mbox{for }i\neq j,\quad i,j\in\{1,2,3\}.
\end{equation}
Let us consider holomorphic functions 
\[
f_{j,N}(z,w)=N^{-2} e^{-N \gamma_{j(z,w)}}, \qquad j=\{ 1,2,3\}.
\]
We check that 
\[
\begin{split}
\partial_z  f_{j,N}(z,w)&=  -N^{-1}  x_je^{-N\gamma_{j(z,w)}},\quad \partial^2_z  f_{j,N}(z,w)= x_j^2 e^{-N\gamma_{j(z,w)}},\\
\partial_w  f_{j,N}(z,w)&=  -N^{-1}  y_je^{-N\gamma_{j(z,w)}},\quad \partial^2_w f_{j,N}(z,w)= y_j^2 e^{-N\gamma_{j(z,w)}},
\end{split}
\]
and
\[
z\partial_z (w\partial_w) f_{j,N}(z,w)=x_j y_j e^{-N\gamma_{j(z,w)}}.
\]
Therefore we get 
\[
\lim_{N\rightarrow \infty} \partial_z f^N_j(z,w) =\lim_{N\rightarrow \infty} \partial_w f^N_j(z,w)=\lim_{N\rightarrow \infty}  f^N_j(z,w)=0 \qquad \forall (z,w)\in \overline{\Omega} 
\]
and 
\[
\lim_{N\rightarrow \infty} \partial^2_z f^N_j(z,w) = \left\{\begin{array}{cc}
x_j^{2}     & (z,w)=(z_0,w_0);  \\
 0    & (z,w)\neq (z_0,w_0);
\end{array}\right.\mbox{    }\lim_{N\rightarrow \infty} \partial^2_w f^N_j(z,w)= \left\{\begin{array}{cc}
y_j^{2}     & (z,w)=(z_0,w_0);  \\
 0    & (z,w)\neq (z_0,w_0).
\end{array}\right.
\]
Similarly as in the case of bi-disc we obtain with the help of Hahn-Banach theorem, that if $(z_0,w_0)$ is not an MDM point there are bounded Borel measures $\mu_1$, $\mu_2$, $\nu_1$, $\nu_2$, $\nu_3$ such that
\[
\begin{split}
   \partial_z \partial_w f_{j,N}(z_0,w_0)&=\int_{\partial\Omega} \partial_z^2 f_{j,N}(z,w)\operatorname{d} \mu_1 + \int_{\partial\Omega} \partial_w^2 f_{j,N}(z,w)\operatorname{d} \mu_2\\
   \\&\;+\int_{\partial\Omega}\partial_z f_{j,N}(z,w)\operatorname{d} \nu_1 + \int_{\partial\Omega} \partial_w f_{j,N}(z,w)\operatorname{d} \nu_2 + \int_{\partial\Omega}  f_{j,N}(z,w)\operatorname{d} \nu_3. 
\end{split}
\]
Let $\mu_1= \alpha\delta_{(z_0,w_0)}+\tilde{\mu}_1$ and $\mu_2= \beta\delta_{(z_0,w_0)}+\tilde{\mu}_2$, where $\delta_{(z_0,w_0)}\bot \;\tilde{\mu}_1$ and $\delta_{(z_0,w_0)}\bot\; \tilde{\mu}_2$. By Lebesgue's dominated convergence theorem, for $j\in\{1,2,3\}$ we get
\[
\begin{split}
\int_{\partial\Omega} \partial_z^2 f_{j,N}(z,w)\operatorname{d} \mu_1& + \int_{\partial\Omega} \partial_w^2 f_{j,N}(z,w)\operatorname{d} \mu_2\\&= \alpha \partial_z^2 f_{j,N}(z_0,w_0)+ \int_{\partial\Omega} \partial_z^2 f_{j,N}(z,w)\operatorname{d} \tilde{\mu}_1
\\&\;\;+\beta \partial^2_w f_{j,N}(z_0,w_0) + \int_{\partial\Omega} \partial_w^2 f_{j,N}(z,w)\operatorname{d} \tilde{\mu}_2\stackrel{N\to \infty}{\rightarrow} \alpha x_j^2 + \beta y_j^2.
\end{split}
\]
and
\[
\int_{\partial\Omega}\partial_z f_{j,N}(z,w)\operatorname{d} \nu_1 + \int_{\partial\Omega} \partial_w f_{j,N}(z,w)\operatorname{d} \nu_2 + \int_{\partial\Omega}  f_{j,N}(z,w)\operatorname{d} \nu_3\stackrel{N\to \infty}{\rightarrow}0
\]
On the other hand
\[
\lim_{N\rightarrow \infty} \partial_z\partial_w f_{i,N}(z_0,w_0)= x_j y_j.
\]
Hence 
\begin{equation}\label{eq: row}
x_jy_j=\alpha x_j^2+\beta y_j^2\qquad\mbox{for } j\in\{1,2,3\}.
\end{equation}
By \eqref{eq: stosunki} we have $p_j= x_j y_j^{-1}\neq 0$ and 
\begin{equation}\label{systemrow}
 1-\alpha p_j-\beta p_j^{-1}=0, \qquad j\in\{1,2,3\}    
\end{equation}
 Hence $p_1,p_2,p_3$ are roots of the quadratic equation $x -\alpha x^2 -\beta=0$ which stands in contradiction with \eqref{eq: stosunki}. Therefore $(z_0,w_0)$ is an MDM point of $\Omega$.
\end{proof}

\bibliographystyle{plain}
\bibliography{mdm}
Krystian Kazaniecki\\
Institute of Analysis, JKU Linz\\
Institute of Mathematics, University of Warsaw\\
krystian.kazaniecki@jku.at\\
 \\
Micha{\l} Wojciechowski\\
Institute of Mathematics, Polish Academy of Sciences\\
miwoj@impan.pl
\end{document}